\documentclass[11pt]{article}
\usepackage[a4paper,margin=1.05in]{geometry}
\usepackage{amsmath,amssymb,amsthm}
\usepackage{enumitem}
\usepackage{xcolor}
\usepackage[colorlinks=true,linkcolor=blue!55!black,citecolor=blue!55!black,urlcolor=blue!55!black]{hyperref}

\usepackage{authblk}

\theoremstyle{plain}
\newtheorem{theorem}{Theorem}[section]
\newtheorem{lemma}[theorem]{Lemma}
\newtheorem{proposition}[theorem]{Proposition}
\newtheorem{corollary}[theorem]{Corollary}
\theoremstyle{remark}
\newtheorem{remark}[theorem]{Remark}

\DeclareMathOperator{\rank}{rank}
\DeclareMathOperator{\col}{col}
\DeclareMathOperator{\spn}{span}
\DeclareMathOperator{\Log}{Log}
\DeclareMathOperator{\Hom}{Hom}
\newcommand{\Z}{\mathbb{Z}}
\newcommand{\C}{\mathbb{C}}
\newcommand{\R}{\mathbb{R}}
\newcommand{\Hn}{\mathcal{H}_n}
\newcommand{\Gn}{G_n}
\newcommand{\Ghat}{\widehat{G}_n}
\newcommand{\uu}{\mathbf{u}}
\newcommand{\qq}{\mathbf{q}}
\newcommand{\NN}{\mathbf{N}}
\newcommand{\UU}{\mathbf{U}}
\newcommand{\res}[1]{\langle #1\rangle_n}
\newcommand{\ip}[2]{\langle #1,\,#2\rangle}

\title{Exact Rank of the Largest Good Lattice Point Matrix}
\author[1]{Long-Hao Xu\footnote{Corresponding author: {e-mail: longhao.xu@outlook.com}}}

\affil[1]{Department of Medical Statistics, University Medical Center G\"{o}ttingen, G\"{o}ttingen, Germany}

\begin{document}
\maketitle

\begin{abstract}
We prove that for every integer $n>2$ the largest good lattice point matrix
$\UU_n=[\uu_h:h\in\Hn]$, whose columns are generated by all integers $h<n$ coprime
to $n$, has rank exactly $\varphi(n)/2+1$. The upper bound follows from the pairing
identity $\uu_h+\uu_{n-h}=(n,\dots,n,2n)^{\top}$. For the lower bound the columns are
centered and the resulting orbit under the multiplicative group $(\Z/n\Z)^{\times}$ is
analysed by means of odd Dirichlet characters; the essential arithmetic input is the
non-vanishing of $\sum_{a}a\,\psi(a)$ for odd primitive characters $\psi$, which is
equivalent to Dirichlet's theorem $L(1,\psi)\neq 0$.
\end{abstract}

\section{Introduction}

Throughout, $n>2$ is a fixed integer. Let
\[
\Gn=(\Z/n\Z)^{\times},\qquad
\Hn=\{\,h\in\{1,\dots,n-1\}:\gcd(h,n)=1\,\},
\]
so that $\Hn$ is a set of representatives of $\Gn$ and $|\Hn|=\varphi(n)$.
For $x\in\Z$ let $\res{x}$ denote the unique element of $\{1,\dots,n\}$ that is
congruent to $x$ modulo $n$; explicitly
\[
\res{x}=x-n\Bigl\lfloor\frac{x-1}{n}\Bigr\rfloor .
\]
Thus $\res{x}=n$ if and only if $n\mid x$. For $h\in\Hn$ the good lattice point (GLP)
column generated by $h$ is
\[
\uu_h=\bigl(\res{h},\res{2h},\dots,\res{nh}\bigr)^{\top}\in\R^{n},
\]
and the largest $n$-run GLP matrix is
\[
\UU_n=[\uu_h:h\in\Hn]\in\R^{n\times\varphi(n)},
\]
the order of the columns being irrelevant for what follows.

\begin{theorem}\label{thm:main}
For every integer $n>2$,
\[
\rank(\UU_n)=\frac{\varphi(n)}{2}+1 .
\]
\end{theorem}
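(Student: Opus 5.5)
The plan is to prove the two inequalities separately. The upper bound comes from an explicit linear relation among the columns; the lower bound comes from a Fourier decomposition over $\Gn$.

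\emph{Upper bound.} For $h\in\Hn$ and $1\le j\le n-1$ we have $n\nmid jh$. Hence $\res{jh}+\res{j(n-h)}=n$, since the two residues are nonzero, congruent to $jh$ and $-jh$, and both lie in $\{1,\dots,n-1\}$. For $j=n$ both entries equal $n$. This gives the pairing identity $\uu_h+\uu_{n-h}=\mathbf{w}:=(n,\dots,n,2n)^{\top}$. Since $n>2$, we have $h\ne n-h$, so $\Hn$ splits into $\varphi(n)/2$ pairs. Choose one representative $h$ from each pair. Then every column lies in the span of $\mathbf{w}$ and those $\varphi(n)/2$ columns, which gives $\rank\le\varphi(n)/2+1$.

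\emph{Lower bound.} Center the columns by writing $\mathbf{v}_h=\uu_h-\tfrac12\mathbf{w}$. Then $\mathbf{v}_{n-h}=-\mathbf{v}_h$. The span of the columns contains $\mathbf{w}$, since it is the sum of a pair, and therefore also contains every $\mathbf{v}_h$. The entries of $\mathbf{v}_h$ are $f(jh)$, where $f(x)=\res{x}-n/2$ for $n\nmid x$, and the last entry is $0$. The function $f$ is odd on $\Z/n\Z$.

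The first claim is that $\mathbf{w}\notin V:=\spn\{\mathbf{v}_h\}$. This holds because the functional ``sum of the first $n-1$ coordinates'' vanishes on every $\mathbf{v}_h$, by oddness of $f$, but equals $n(n-1)\ne0$ on $\mathbf{w}$.

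It therefore suffices to show $\dim V=\varphi(n)/2$. Restrict to the coordinates $j\in\Hn$. Then $(\mathbf{v}_h)_j=F(jh)$ with $F=f|_{\Gn}$. The matrix $[F(jh)]_{j,h\in\Gn}$ is a group matrix, i.e.\ a convolution operator on $\Gn$. Its rank equals the number of characters $\chi$ of $\Gn$ with $\hat F(\chi)=\sum_{a\in\Gn}F(a)\overline{\chi}(a)\ne0$. Because $F$ is odd, $\hat F(\chi)=0$ for even $\chi$, so at most the $\varphi(n)/2$ odd characters contribute. For odd $\chi$ the constant $-n/2$ drops out, and
\[
\hat F(\chi)=\sum_{a\in\Hn}a\,\overline{\chi}(a).
\]
So we need $\sum_{a\in\Hn}a\chi(a)\ne0$ for every odd character $\chi$ mod $n$. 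This gives rank $\varphi(n)/2$ already on the rows $j\in\Hn$, so $\dim V\ge\varphi(n)/2$, and the upper bound forces equality.

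\emph{Main obstacle.} The hard step is the non-vanishing of $\sum_{a\in\Hn}a\chi(a)$ for imprimitive odd $\chi$; the restriction to the rows $j\in\Hn$ is a concession that makes the imprimitive case appear. The primitive case is classical: for odd primitive $\psi$ mod $m$, $L(1,\psi)=\frac{\pi i\,\tau(\psi)}{m^2}\sum_{a=1}^{m}a\,\overline\psi(a)$, so the sum is nonzero by Dirichlet's theorem. If $\chi$ is induced from primitive $\psi$ mod $m\mid n$, write $\sum_{a\le n,\gcd(a,n)=1}a\psi(a)$ via M\"obius inversion over the primes $p\mid n$, $p\nmid m$. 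This should yield the primitive sum times $\prod_{p\mid n,\,p\nmid m}(1-\psi(p))$, up to a nonzero factor like $n/m$. The product can vanish when $\psi(p)=1$, and that is the danger.

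If it does vanish, I would not restrict to the rows $j\in\Hn$. Instead I would use all rows $j=1,\dots,n$, decomposing by $d=\gcd(j,n)$. Rows with $\gcd(j,n)=d$ give the same analysis with $F_d(a)=f(da)$ on $(\Z/(n/d)\Z)^\times$ pulled back. For each odd $\chi$ I would choose $d$ so that the relevant sum is essentially the primitive sum $\sum_a a\psi(a)$. Then $\chi$ contributes a nonzero eigencomponent for some divisor block, which suffices because the $\Gn$-isotypic components of the column space are detected by any row block. The approach is to show the full column space, viewed as a $\Gn$-representation on $\spn\{\mathbf{v}_h\}$, contains every odd character. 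For this it suffices to exhibit, for each odd $\chi$, one row block with nonzero Fourier coefficient; the block $d=n/m$, with $m$ the conductor, should work directly by the primitive case.
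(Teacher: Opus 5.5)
Your proposal is correct. Once you drop the unit rows in favour of the row block $\gcd(j,n)=n/m$, it is essentially the paper's argument: that block is exactly the support of the eigenvector $e_\chi$ in Lemma~\ref{lem:eig}, where $f(da)=d(a-m/2)$ reduces the pairing to $d\sum_a a\,\overline{\psi}(a)\neq 0$ by Dirichlet's theorem; your isotypic-component phrasing is the paper's closing projection remark, and the switch is genuinely necessary, since the unit-row sum does vanish for $n=21$ with $\chi$ induced from the quadratic character mod $3$, as the paper shows. Your sum-of-coordinates functional for $\mathbf{w}\notin V$ is valid, though the vanishing last coordinate of each $\mathbf{v}_h$, which you already noted, suffices.
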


\paragraph{Convention on the ground field.}
The rank of a real matrix equals the size of its largest non-vanishing minor and is
therefore the same whether the matrix is regarded over $\R$ or over $\C$. Equivalently,
real vectors are linearly independent over $\R$ if and only if they are linearly
independent over $\C$. We use this freely: the lower bound is proved over $\C$.

\paragraph{Outline.}
The proof has four ingredients: the pairing identity (Section~\ref{sec:pair}), which
gives the upper bound; the passage to centered columns, which turns the problem into a
statement about a permutation representation of $\Gn$ (Section~\ref{sec:action}); a
non-vanishing statement for the sums $\sum_a a\,\psi(a)$ attached to odd primitive
Dirichlet characters $\psi$, which rests on Dirichlet's theorem $L(1,\psi)\neq0$
(Section~\ref{sec:nt}); and an explicit construction of eigenvectors that pair
non-trivially with the centered column (Section~\ref{sec:lower}).

\section{The pairing identity and the upper bound}\label{sec:pair}

\begin{lemma}[Pairing identity]\label{lem:pair}
Let $h\in\Hn$. Then $n-h\in\Hn$, $n-h\neq h$, and
\[
\uu_h+\uu_{n-h}=\NN_n:=(n,\dots,n,2n)^{\top}.
\]
\end{lemma}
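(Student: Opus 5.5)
The plan is to verify the three assertions separately, each directly from the definitions.

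First, $\gcd(n-h,n)=\gcd(h,n)=1$ and $1\le n-h\le n-1$, so $n-h\in\Hn$. If $n-h=h$, then $n=2h$, and $\gcd(h,n)=h$ forces $h=1$ and $n=2$, contradicting $n>2$. Hence $n-h\neq h$.

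For the vector identity we compare coordinates. Fix $k\in\{1,\dots,n\}$ and compare the $k$-th coordinates $\res{kh}$ and $\res{k(n-h)}$. Since $k(n-h)\equiv -kh \pmod n$, we have $\res{k(n-h)}\equiv -\res{kh}\pmod n$. Both residues lie in $\{1,\dots,n\}$, so there are two cases.
\begin{enumerate}[label=(\roman*)]
\item If $n\nmid kh$, then $\res{kh}\in\{1,\dots,n-1\}$. The element $n-\res{kh}$ lies in $\{1,\dots,n-1\}$ and is congruent to $-kh$, so by uniqueness $\res{k(n-h)}=n-\res{kh}$. The two coordinates therefore sum to $n$.
\item If $n\mid kh$, then both residues equal $n$, and the coordinates sum to $2n$.
\end{enumerate}

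It remains to identify which $k$ fall into case (ii). Because $\gcd(h,n)=1$, we have $n\mid kh$ if and only if $n\mid k$, and in the range $1\le k\le n$ this happens only for $k=n$. So case (ii) occurs exactly at the last coordinate, and $\uu_h+\uu_{n-h}=(n,\dots,n,2n)^{\top}=\NN_n$.

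None of these steps is a real obstacle. The only points that need care are using the fact that $\res{\cdot}$ takes values in $\{1,\dots,n\}$ rather than $\{0,\dots,n-1\}$, which is what produces the $2n$ in the last coordinate, and invoking coprimality to show that case (ii) occurs only at $k=n$.
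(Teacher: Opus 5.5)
Your proof is correct and follows essentially the same route as the paper: a gcd argument gives $n-h\in\Hn$ and $n-h\neq h$, and a coordinatewise comparison shows the residues are complementary in $\{1,\dots,n-1\}$ for $k<n$ and both equal $n$ at $k=n$. The only cosmetic difference is that you split cases by whether $n\mid kh$ and then use coprimality to place that case at $k=n$, whereas the paper splits directly into $i\le n-1$ and $i=n$.
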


\begin{proof}
Since $\gcd(n-h,n)=\gcd(h,n)=1$, we have $n-h\in\Hn$. If $n-h=h$, then $h=n/2$ and
$\gcd(h,n)=n/2>1$ because $n>2$, a contradiction.

Let $1\le i\le n-1$. Since $\gcd(h,n)=1$ and $n\nmid i$, we have $n\nmid ih$, so
$r:=\res{ih}\in\{1,\dots,n-1\}$. Then
$i(n-h)\equiv -ih\equiv -r\equiv n-r\pmod n$ with $n-r\in\{1,\dots,n-1\}$, hence
$\res{i(n-h)}=n-r$, and the $i$-th coordinates of $\uu_h$ and $\uu_{n-h}$ add up to
$n$. For $i=n$ we have $nh\equiv n(n-h)\equiv 0\pmod n$, so both $n$-th coordinates
equal $n$ and add up to $2n$.
\end{proof}

By Lemma~\ref{lem:pair} the set $\Hn$ is partitioned into $\varphi(n)/2$ two-element
subsets $\{h,n-h\}$. Throughout we fix a set
\begin{equation}\label{eq:R}
R\subset\Hn \ \text{ containing exactly one element of each pair } \{h,n-h\},
\qquad |R|=\varphi(n)/2 .
\end{equation}

\begin{corollary}[Upper bound]\label{cor:upper}
$\rank(\UU_n)\le \varphi(n)/2+1$.
\end{corollary}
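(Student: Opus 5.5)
The column space of $\UU_n$ will be shown to lie inside $\spn\{\uu_h : h\in R\}+\spn\{\NN_n\}$, a space of dimension at most $|R|+1=\varphi(n)/2+1$. The argument uses only Lemma~\ref{lem:pair} and the choice of $R$ in \eqref{eq:R}.

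First, every $h\in\Hn$ either lies in $R$ or satisfies $n-h\in R$, because $R$ contains exactly one element of each pair $\{h,n-h\}$ and these pairs partition $\Hn$. In the first case $\uu_h$ is itself one of the vectors $\uu_r$ with $r\in R$. In the second case, set $r:=n-h\in R$. Lemma~\ref{lem:pair}, applied to $r$ (note $n-r=h$), gives
\[
\uu_h=\NN_n-\uu_r .
\]
So in both cases $\uu_h$ lies in the span of the $\varphi(n)/2+1$ vectors
\[
\{\uu_r : r\in R\}\cup\{\NN_n\}.
\]

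Hence the column space of $\UU_n$ is contained in a subspace spanned by $\varphi(n)/2+1$ vectors. That subspace has dimension at most $\varphi(n)/2+1$, and so $\rank(\UU_n)\le\varphi(n)/2+1$.

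No step is a real obstacle. The only points to check are that $R$ is well defined and has size $\varphi(n)/2$, and both are guaranteed by Lemma~\ref{lem:pair}: it gives $n-h\in\Hn$ and $n-h\neq h$, so the pairs have exactly two elements. We do not need $\NN_n$ to lie in the column space for the upper bound. It does lie there anyway, since $\NN_n=\uu_1+\uu_{n-1}$.
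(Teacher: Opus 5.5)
Your proof is correct and is essentially the paper's argument. Lemma~\ref{lem:pair} gives $\uu_{n-h}=\NN_n-\uu_h$ for $h\in R$, so every column lies in $\spn(\{\NN_n\}\cup\{\uu_h:h\in R\})$, which has dimension at most $1+|R|=\varphi(n)/2+1$.
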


\begin{proof}
For $h\in R$ we have $\uu_{n-h}=\NN_n-\uu_h$ by Lemma~\ref{lem:pair}. Hence every
column of $\UU_n$ lies in $\spn(\{\NN_n\}\cup\{\uu_h:h\in R\})$, a space of dimension
at most $1+|R|=1+\varphi(n)/2$.
\end{proof}

\section{Centered columns and the action of \texorpdfstring{$\Gn$}{Gn}}\label{sec:action}

\subsection{Centered columns}

Define $q_n:\Z/n\Z\to\R$ by
\[
q_n(x)=
\begin{cases}
0, & x\equiv 0\pmod n,\\[3pt]
r-\dfrac{n}{2}, & x\equiv r\pmod n,\ 1\le r\le n-1 ,
\end{cases}
\]
and for $h\in\Hn$ put
\[
\qq_h:=\uu_h-\tfrac12\NN_n\in\R^{n}.
\]

\begin{lemma}\label{lem:centered}
\begin{enumerate}[label=\textup{(\roman*)},leftmargin=*]
\item $q_n(-x)=-q_n(x)$ for every $x\in\Z/n\Z$.
\item For $h\in\Hn$ and $1\le i\le n$ one has $(\qq_h)_i=q_n(ih)$; in particular
      $(\qq_h)_n=0$.
\item For $h\in\Hn$ one has $\qq_{n-h}=-\qq_h$.
\end{enumerate}
\end{lemma}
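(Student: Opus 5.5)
The plan is to verify the three parts in order, each reducing to an elementary residue computation, with (ii) doing most of the work and (iii) following from (i) and (ii).

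For (i), we split into two cases. If $x\equiv 0\pmod n$, then $-x\equiv 0$ as well, and both sides vanish. Otherwise write $x\equiv r$ with $1\le r\le n-1$. Then $-x\equiv n-r$ with $1\le n-r\le n-1$. Hence
\[
q_n(-x)=(n-r)-\tfrac n2=\tfrac n2-r=-q_n(x).
\]
Note that $q_n$ is well defined on $\Z/n\Z$, since it depends only on the residue class.

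For (ii), we compute coordinatewise, using the definitions $(\uu_h)_i=\res{ih}$ and $(\NN_n)_i=n$ for $i<n$, $(\NN_n)_n=2n$.

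\emph{Case $1\le i\le n-1$.} As in the proof of Lemma~\ref{lem:pair}, $\gcd(h,n)=1$ and $n\nmid i$ give $n\nmid ih$. So $r:=\res{ih}\in\{1,\dots,n-1\}$ and $ih\equiv r\pmod n$. Therefore
\[
(\qq_h)_i=r-\tfrac n2=q_n(ih).
\]

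\emph{Case $i=n$.} Here $\res{nh}=n$, so $(\qq_h)_n=n-\tfrac12\cdot 2n=0$. Since $nh\equiv 0\pmod n$, this equals $q_n(nh)$.

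This proves the formula $(\qq_h)_i=q_n(ih)$ and the vanishing of the last coordinate.

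For (iii), by Lemma~\ref{lem:pair} we have $n-h\in\Hn$, so (ii) applies to it. For each $i$,
\[
(\qq_{n-h})_i=q_n(i(n-h))=q_n(-ih)=-q_n(ih)=-(\qq_h)_i,
\]
using (i) for the third equality. Alternatively, subtracting $\NN_n$ from both sides of the pairing identity $\uu_h+\uu_{n-h}=\NN_n$ gives $\qq_h+\qq_{n-h}=0$ directly.

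There is no real obstacle. The only point needing care is the case $i=n$, where the factor $\tfrac12$ on $\NN_n$ interacts with its exceptional last entry $2n$.
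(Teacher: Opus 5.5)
Your proposal is correct and follows the paper's proof: (i) and (ii) are the same residue computations, with the same care taken over the case $i=n$. For (iii) your main route combines (i) and (ii), while the paper subtracts $\NN_n$ from the pairing identity, which is exactly your stated alternative; both arguments are valid.
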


\begin{proof}
(i) If $x\equiv 0$ both sides vanish. If $x\equiv r$ with $1\le r\le n-1$, then
$-x\equiv n-r$ with $1\le n-r\le n-1$, so
$q_n(-x)=(n-r)-\frac n2=-\bigl(r-\frac n2\bigr)=-q_n(x)$.

(ii) For $1\le i\le n-1$ we have $ih\not\equiv 0\pmod n$ (as in the proof of
Lemma~\ref{lem:pair}), hence $(\qq_h)_i=\res{ih}-\frac n2=q_n(ih)$. For $i=n$,
$(\qq_h)_n=n-\frac12\cdot 2n=0=q_n(0)=q_n(nh)$.

(iii) By Lemma~\ref{lem:pair}, $\qq_h+\qq_{n-h}=\uu_h+\uu_{n-h}-\NN_n=0$.
\end{proof}

\subsection{The permutation representation}

Let $V$ be the $\C$-vector space of all functions $f:\Z/n\Z\to\C$, equipped with the
inner product
\[
\ip{f}{g}=\sum_{x\in\Z/n\Z} f(x)\,\overline{g(x)},
\]
which is linear in the first and conjugate-linear in the second variable. For
$h\in\Gn$ define $\rho(h):V\to V$ by
\[
(\rho(h)f)(x)=f(hx),\qquad x\in\Z/n\Z .
\]

\begin{lemma}\label{lem:rep}
$\rho(1)=\mathrm{id}_V$ and $\rho(hk)=\rho(h)\rho(k)$ for all $h,k\in\Gn$; each
$\rho(h)$ is unitary; and
\[
\ip{\rho(h)f}{g}=\ip{f}{\rho(h^{-1})g}\qquad(f,g\in V,\ h\in\Gn).
\]
\end{lemma}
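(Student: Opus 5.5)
The plan is to verify the three assertions of Lemma~\ref{lem:rep} one at a time, directly from the definition $(\rho(h)f)(x)=f(hx)$. Throughout, the only arithmetic fact used is that multiplication by a unit $h\in\Gn$ is a bijection of $\Z/n\Z$, with inverse given by multiplication by $h^{-1}$.

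First, the homomorphism property. The identity $\rho(1)=\mathrm{id}_V$ is immediate, since $(\rho(1)f)(x)=f(x)$. For the product rule we compute, for $f\in V$ and $x\in\Z/n\Z$,
\[
(\rho(h)\rho(k)f)(x)=(\rho(k)f)(hx)=f(khx)=f(hkx)=(\rho(hk)f)(x).
\]
Here the commutativity of $\Gn$ is what makes the order of composition harmless; the only point needing care is to unwind the definition in the correct order. Linearity of each $\rho(h)$ is clear.

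Next, the adjoint identity. We reindex the sum by substituting $y=hx$, which is legitimate because $x\mapsto hx$ is a bijection of $\Z/n\Z$:
\[
\ip{\rho(h)f}{g}=\sum_{x}f(hx)\overline{g(x)}=\sum_{y}f(y)\overline{g(h^{-1}y)}=\ip{f}{\rho(h^{-1})g}.
\]

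Finally, unitarity. By the homomorphism property, $\rho(h^{-1})\rho(h)=\rho(1)=\mathrm{id}_V$. Combining this with the adjoint identity gives
\[
\ip{\rho(h)f}{\rho(h)g}=\ip{f}{\rho(h^{-1})\rho(h)g}=\ip{f}{g},
\]
so each $\rho(h)$ is unitary. There is no real obstacle in any of these steps; the argument is entirely routine.
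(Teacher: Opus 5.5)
Your proof is correct and essentially the paper's: the homomorphism check is identical, and both arguments rest only on $x\mapsto hx$ being a bijection of $\Z/n\Z$. The one difference is the order of the last two parts. You prove the adjoint identity directly by the substitution $y=hx$ and then deduce unitarity from it together with $\rho(h^{-1})\rho(h)=\mathrm{id}_V$. The paper instead observes that $\rho(h)$ permutes coordinates, hence is unitary, and then derives the adjoint identity from unitarity.
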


\begin{proof}
Clearly $\rho(1)=\mathrm{id}_V$, and
$(\rho(h)\rho(k)f)(x)=(\rho(k)f)(hx)=f(khx)=(\rho(hk)f)(x)$, where we used that $\Gn$
is abelian. Since $x\mapsto hx$ is a bijection of $\Z/n\Z$, the operator $\rho(h)$
permutes the coordinates of $f$ and therefore preserves the inner product. Finally,
unitarity applied to $f$ and $\rho(h^{-1})g$ gives
$\ip{\rho(h)f}{\rho(h)\rho(h^{-1})g}=\ip{f}{\rho(h^{-1})g}$, and
$\rho(h)\rho(h^{-1})=\rho(1)=\mathrm{id}_V$.
\end{proof}

We identify $V$ with $\C^{n}$ via
\[
\iota:V\to\C^{n},\qquad
\iota(f)=\bigl(f(1),f(2),\dots,f(n-1),f(0)\bigr)^{\top},
\]
i.e.\ the coordinate $x\in\{1,\dots,n-1\}$ is placed in row $x$ and the coordinate
$x=0$ in row $n$. Regard $q_n$ as an element of $V$. For $h\in\Hn$ and
$1\le i\le n-1$, $\iota(\rho(h)q_n)_i=q_n(hi)=(\qq_h)_i$ by
Lemma~\ref{lem:centered}(ii), and $\iota(\rho(h)q_n)_n=q_n(0)=0=(\qq_h)_n$. Hence
\begin{equation}\label{eq:ident}
\iota\bigl(\rho(h)q_n\bigr)=\qq_h\qquad(h\in\Hn).
\end{equation}
Moreover, by Lemma~\ref{lem:centered}(i),
\begin{equation}\label{eq:odd}
\rho(-h)q_n=-\rho(h)q_n\qquad(h\in\Gn),
\end{equation}
since $(\rho(-h)q_n)(x)=q_n(-hx)=-q_n(hx)$. In particular $\rho(h)q_n$ depends only on
the class of $h$ in $\Gn$, and \eqref{eq:ident}, \eqref{eq:odd} are consistent with
Lemma~\ref{lem:centered}(iii).

\subsection{Characters of \texorpdfstring{$\Gn$}{Gn}}

Let $\Ghat=\Hom(\Gn,\C^{\times})$ be the character group of $\Gn$. We use the following
standard facts about the characters of a finite abelian group $G$ (see
e.g.\ \cite[Chapter~6]{Ap}): $|\widehat G|=|G|$; every character takes values in the
unit circle, so $\overline{\chi(g)}=\chi(g)^{-1}=\chi(g^{-1})$; and characters separate
points, i.e.\ for every $g\neq 1$ there is $\chi\in\widehat G$ with $\chi(g)\neq 1$.

Since $(-1)^2=1$ in $\Gn$, every $\chi\in\Ghat$ satisfies $\chi(-1)\in\{1,-1\}$. We
call $\chi$ \emph{odd} if $\chi(-1)=-1$ and \emph{even} if $\chi(-1)=1$. For a function
$c:\Gn\to\C$ we write
\[
\hat c(\chi)=\sum_{h\in\Gn}c(h)\,\chi(h)\qquad(\chi\in\Ghat).
\]

\begin{lemma}\label{lem:char}
\begin{enumerate}[label=\textup{(\alph*)},leftmargin=*]
\item For $g\in\Gn$, $\sum_{\chi\in\Ghat}\chi(g)=\varphi(n)$ if $g=1$, and
      $\sum_{\chi\in\Ghat}\chi(g)=0$ if $g\neq 1$.
\item If $c:\Gn\to\C$ satisfies $\hat c(\chi)=0$ for all $\chi\in\Ghat$, then
      $c\equiv 0$.
\item Exactly $\varphi(n)/2$ characters of $\Gn$ are odd.
\end{enumerate}
\end{lemma}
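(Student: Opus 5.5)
We prove the three parts in order, using only the standard facts just recalled: $|\Ghat|=\varphi(n)$, characters are unimodular, and characters separate points.

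For (a), if $g=1$ then every $\chi(g)=1$, so the sum is $|\Ghat|=\varphi(n)$. If $g\neq1$, choose $\psi\in\Ghat$ with $\psi(g)\neq1$. The map $\chi\mapsto\psi\chi$ is a bijection of $\Ghat$ because $\Ghat$ is a group under pointwise multiplication. Writing $S=\sum_\chi\chi(g)$, this gives
\[
\psi(g)S=\sum_\chi(\psi\chi)(g)=S,
\]
so $(\psi(g)-1)S=0$ and hence $S=0$.

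For (b), we use an inversion formula. Fix $g\in\Gn$ and compute
\[
\sum_{\chi}\hat c(\chi)\overline{\chi(g)}=\sum_{h}c(h)\sum_\chi\chi(h)\chi(g^{-1})=\sum_h c(h)\sum_\chi\chi(hg^{-1}).
\]
By (a) the inner sum equals $\varphi(n)$ when $h=g$ and $0$ otherwise. The whole expression therefore equals $\varphi(n)c(g)$. If all $\hat c(\chi)$ vanish, the left side is $0$, so $c(g)=0$ for every $g$.

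For (c), note that $-1\neq1$ in $\Gn$ since $n>2$. Applying (a) with $g=-1$ gives $\sum_\chi\chi(-1)=0$. Each $\chi(-1)$ is $\pm1$, so the numbers of even characters $E$ and odd characters $O$ satisfy $E-O=0$. Together with $E+O=\varphi(n)$ this yields $O=\varphi(n)/2$.

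Alternatively, one can argue that the even characters form the kernel of the homomorphism $\chi\mapsto\chi(-1)$ onto $\{\pm1\}$, which is surjective by separation of points; hence that kernel has index $2$. The only point needing care in the whole lemma is the bijectivity of $\chi\mapsto\psi\chi$ in (a), and that is immediate from the group structure of $\Ghat$.
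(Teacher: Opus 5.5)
Your proof is correct and matches the paper's for (a) and (b), including the bijection $\chi\mapsto\psi\chi$ and the inversion formula $\sum_\chi \hat c(\chi)\overline{\chi(g)}=\varphi(n)\,c(g)$. For (c), your main argument reads off $E-O=\sum_\chi\chi(-1)=0$ directly from (a). The paper instead shows that the even characters form the index-$2$ kernel of the surjective homomorphism $\chi\mapsto\chi(-1)$, which is exactly your stated alternative, so the two routes differ only cosmetically.
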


\begin{proof}
(a) For $g=1$ every term equals $1$ and there are $|\Ghat|=\varphi(n)$ terms. For
$g\neq 1$ choose $\chi_1$ with $\chi_1(g)\neq 1$. Since $\chi\mapsto\chi_1\chi$ is a
bijection of $\Ghat$,
\[
\sum_{\chi\in\Ghat}\chi(g)=\sum_{\chi\in\Ghat}(\chi_1\chi)(g)
=\chi_1(g)\sum_{\chi\in\Ghat}\chi(g),
\]
which forces $\sum_{\chi}\chi(g)=0$.

(b) For $k\in\Gn$, using $\overline{\chi(k)}=\chi(k^{-1})$ and (a),
\[
\sum_{\chi\in\Ghat}\overline{\chi(k)}\,\hat c(\chi)
=\sum_{h\in\Gn}c(h)\sum_{\chi\in\Ghat}\chi(h)\chi(k^{-1})
=\sum_{h\in\Gn}c(h)\sum_{\chi\in\Ghat}\chi(hk^{-1})
=\varphi(n)\,c(k).
\]
If $\hat c\equiv 0$ the left-hand side vanishes, hence $c(k)=0$ for all $k$.

(c) Since $n>2$ we have $-1\neq 1$ in $\Gn$, so there is $\chi_0\in\Ghat$ with
$\chi_0(-1)\neq 1$, i.e.\ $\chi_0(-1)=-1$. The map
$\varepsilon:\Ghat\to\{1,-1\}$, $\chi\mapsto\chi(-1)$, is a group homomorphism, and it
is surjective because $\varepsilon(\chi_0)=-1$. Its kernel (the set of even characters)
therefore has index $2$ in $\Ghat$, so exactly $|\Ghat|/2=\varphi(n)/2$ characters are
odd.
\end{proof}

\section{The number-theoretic input}\label{sec:nt}

Let $f\ge 1$. A Dirichlet character modulo $f$ is a character $\psi$ of
$(\Z/f\Z)^{\times}$, extended to a function on $\Z$ by $\psi(a)=0$ whenever
$\gcd(a,f)>1$. It is \emph{primitive} if it is not induced by a character of a smaller
modulus. The \emph{conductor} of a character $\chi$ modulo $n$ is the smallest $f\mid n$
such that $\chi$ is induced by a character $\chi^{*}$ modulo $f$, i.e.\
$\chi(h)=\chi^{*}(h)$ for all integers $h$ with $\gcd(h,n)=1$; this $\chi^{*}$ is
unique and primitive. The Gauss sum of $\psi$ is
\[
\tau(\psi)=\sum_{a=1}^{f}\psi(a)\,e^{2\pi i a/f}.
\]
We recall that $\sum_{a=1}^{f}\psi(a)=0$ for every non-principal $\psi$: choosing an
integer $b$ coprime to $f$ with $\psi(b)\neq 1$, the substitution $a\mapsto ba$ gives
$\sum_a\psi(a)=\psi(b)\sum_a\psi(a)$. We shall use the following three classical
facts.
\begin{enumerate}[label=\textup{(F\arabic*)},ref=(F\arabic*),leftmargin=*]
\item\label{F1} If $\psi$ is primitive modulo $f$, then for every integer $m$
\[
\sum_{a=1}^{f}\psi(a)\,e^{2\pi i a m/f}=\overline{\psi(m)}\,\tau(\psi),
\qquad\text{and}\qquad |\tau(\psi)|=\sqrt f .
\]
(See \cite[Chapter~9]{Da} or \cite[Chapter~8]{Ap}.)
\item\label{F2} For every complex number $z$ with $|z|=1$, $z\neq 1$, the series
$\sum_{m\ge 1}z^{m}/m$ converges and equals $-\Log(1-z)$, where $\Log$ denotes the
principal branch of the logarithm. (Convergence follows from Dirichlet's test, and
the value from Abel's limit theorem applied to the power series of $-\Log(1-z)$.)
\item\label{F3} (Dirichlet) If $\psi$ is a non-principal Dirichlet character, then
$L(1,\psi)=\sum_{m\ge 1}\psi(m)/m\neq 0$. (See \cite[Chapter~4]{Da} or
\cite[Chapter~6]{Ap}.)
\end{enumerate}

\begin{lemma}\label{lem:S}
Let $\psi$ be an odd primitive Dirichlet character modulo $f$, and put
\[
S(\psi):=\sum_{a=1}^{f-1}a\,\psi(a)=\sum_{\substack{1\le a<f\\ \gcd(a,f)=1}}a\,\psi(a).
\]
Then
\begin{equation}\label{eq:S}
L(1,\bar\psi)=-\frac{i\pi}{f\,\tau(\psi)}\,S(\psi),
\end{equation}
and in particular $S(\psi)\neq 0$.
\end{lemma}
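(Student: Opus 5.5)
We derive \eqref{eq:S} by writing $\bar\psi(m)$ as a finite Fourier sum via \ref{F1}, substituting into the series for $L(1,\bar\psi)$, and then evaluating the resulting logarithms with \ref{F2}.

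\textbf{Step 1: finite Fourier expansion.} Since $\psi$ is odd, $\psi$ is non-principal, and necessarily $f\ge 3$. By \ref{F1}, for every integer $m$,
\[
\bar\psi(m)=\frac{1}{\tau(\psi)}\sum_{a=1}^{f}\psi(a)\,e^{2\pi i am/f},
\]
where $\tau(\psi)\neq0$ because $|\tau(\psi)|=\sqrt f$. This holds even when $\gcd(m,f)>1$, since both sides then vanish.

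\textbf{Step 2: exchange of sums.} Put $\zeta_a=e^{2\pi i a/f}$. We would insert the expansion into the partial sums $\sum_{m\le M}\bar\psi(m)/m$ and exchange the finite sum over $a$ with the sum over $m$. For $a=f$ the coefficient is $\psi(f)=0$, so only $1\le a\le f-1$ contribute, and for these $\zeta_a\ne1$. Letting $M\to\infty$ and using \ref{F2},
\[
L(1,\bar\psi)=-\frac{1}{\tau(\psi)}\sum_{a=1}^{f-1}\psi(a)\Log(1-\zeta_a).
\]
Working with partial sums makes the interchange trivial, because the sum over $a$ is finite.

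\textbf{Step 3: evaluating the logarithm.} For $0<a<f$, write $1-e^{i\theta}=-2i\sin(\theta/2)e^{i\theta/2}$ with $\theta=2\pi a/f\in(0,2\pi)$. Then $\sin(\theta/2)>0$, and the modulus and argument give
\[
\Log(1-\zeta_a)=\log\bigl(2\sin(\pi a/f)\bigr)+i\Bigl(\frac{\pi a}{f}-\frac{\pi}{2}\Bigr),
\]
since the argument $\pi a/f-\pi/2$ lies in $(-\pi/2,\pi/2)$, so this is the principal branch.

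\textbf{Step 4: using oddness.} The real part $\log(2\sin(\pi a/f))$ is invariant under $a\mapsto f-a$, while $\psi(f-a)=-\psi(a)$. Pairing $a$ with $f-a$ therefore kills the real-part contribution. The constant $-i\pi/2$ term is killed by $\sum_a\psi(a)=0$, which holds because $\psi$ is non-principal. What remains is
\[
L(1,\bar\psi)=-\frac{1}{\tau(\psi)}\cdot\frac{i\pi}{f}\sum_{a=1}^{f-1} a\,\psi(a),
\]
which is \eqref{eq:S}.

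\textbf{Step 5: non-vanishing.} The character $\bar\psi$ is non-principal, so \ref{F3} gives $L(1,\bar\psi)\ne0$. The prefactor $-i\pi/(f\tau(\psi))$ is finite, so \eqref{eq:S} forces $S(\psi)\ne0$.

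The main obstacle is Step 3: pinning down the principal branch of $\Log(1-\zeta_a)$ correctly, so that the argument is exactly $\pi a/f-\pi/2$ with no $2\pi$ ambiguity. Everything else is bookkeeping.
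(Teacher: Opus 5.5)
Your proof is correct and follows essentially the same route as the paper's. Both arguments expand $\bar\psi(m)$ via \ref{F1}, pass to the limit in partial sums using \ref{F2}, evaluate $\Log(1-\zeta_a)=\log(2\sin(\pi a/f))+i\pi(a/f-\tfrac12)$ on the principal branch, kill the real parts by oddness and the constant term by $\sum_a\psi(a)=0$, and conclude with \ref{F3}.
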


\begin{proof}
Since $\psi$ is odd it is non-principal, so $f\ge 3$ (modulo $1$ and $2$ only the
principal character exists); thus $\psi(f)=0$ and the two expressions for $S(\psi)$
agree. Put $z_a=e^{2\pi i a/f}$ for $1\le a\le f-1$; then $|z_a|=1$ and $z_a\neq 1$.

By \ref{F1}, for every integer $m\ge 1$,
\[
\overline{\psi(m)}=\frac{1}{\tau(\psi)}\sum_{a=1}^{f-1}\psi(a)\,z_a^{\,m}
\]
(the term $a=f$ vanishes). Dividing by $m$ and summing over $1\le m\le M$, which is a
finite interchange of summations, we get
\[
\sum_{m=1}^{M}\frac{\overline{\psi(m)}}{m}
=\frac{1}{\tau(\psi)}\sum_{a=1}^{f-1}\psi(a)\sum_{m=1}^{M}\frac{z_a^{\,m}}{m}.
\]
As $M\to\infty$, the left-hand side tends to $L(1,\bar\psi)$ and, by \ref{F2}, each
inner sum on the right tends to $-\Log(1-z_a)$. Hence
\begin{equation}\label{eq:L1}
L(1,\bar\psi)=-\frac{1}{\tau(\psi)}\sum_{a=1}^{f-1}\psi(a)\Log(1-z_a).
\end{equation}
For $1\le a\le f-1$ we have $0<\pi a/f<\pi$ and
\[
1-z_a=1-e^{2\pi i a/f}=-2i\sin\Bigl(\frac{\pi a}{f}\Bigr)e^{i\pi a/f}
=2\sin\Bigl(\frac{\pi a}{f}\Bigr)\,e^{\,i(\pi a/f-\pi/2)},
\]
where $2\sin(\pi a/f)>0$ and $\pi a/f-\pi/2\in(-\pi/2,\pi/2)$. Therefore
\[
\Log(1-z_a)=\log\Bigl(2\sin\frac{\pi a}{f}\Bigr)+i\pi\Bigl(\frac{a}{f}-\frac12\Bigr).
\]
Now $\psi(f-a)=\psi(-1)\psi(a)=-\psi(a)$, while $\sin(\pi(f-a)/f)=\sin(\pi a/f)$;
pairing $a$ with $f-a$ therefore gives
$\sum_{a=1}^{f-1}\psi(a)\log\bigl(2\sin\frac{\pi a}{f}\bigr)=0$. Moreover
$\sum_{a=1}^{f-1}\psi(a)=0$ because $\psi$ is non-principal. Substituting into
\eqref{eq:L1} yields
\[
L(1,\bar\psi)=-\frac{i\pi}{\tau(\psi)}\sum_{a=1}^{f-1}\psi(a)\Bigl(\frac af-\frac12\Bigr)
=-\frac{i\pi}{f\,\tau(\psi)}\sum_{a=1}^{f-1}a\,\psi(a),
\]
which is \eqref{eq:S}. Finally $\tau(\psi)\neq 0$ by \ref{F1}, and $L(1,\bar\psi)\neq 0$
by \ref{F3} (note that $\bar\psi$ is again non-principal), so $S(\psi)\neq 0$.
\end{proof}

\begin{remark}\label{rem:B1}
In terms of the generalized Bernoulli number $B_{1,\psi}=\frac1f\sum_{a=1}^{f}a\,\psi(a)$
and the value $L(0,\psi)=-B_{1,\psi}$, Lemma~\ref{lem:S} reads
$S(\psi)=fB_{1,\psi}=-fL(0,\psi)\neq 0$; equivalently
$L(0,\psi)=\frac{\tau(\psi)}{i\pi}\,L(1,\bar\psi)$, which is the case $s=0$ of the
functional equation of $L(s,\psi)$ for odd primitive $\psi$. Either route reduces the
non-vanishing of $S(\psi)$ to Dirichlet's theorem \ref{F3}; cf.\
\cite[Chapter~4]{Wa}.
\end{remark}

\section{Eigenvectors and the lower bound}\label{sec:lower}

\begin{lemma}\label{lem:eig}
Let $\chi$ be an odd character of $\Gn$, let $f$ be its conductor, write $n=df$, and let
$\chi^{*}$ be the primitive character modulo $f$ inducing $\chi$. Define $e_\chi\in V$ by
\[
e_\chi(x)=
\begin{cases}
\chi^{*}(a), & \text{if } x\equiv da\pmod n \text{ for some integer } a
              \text{ with } \gcd(a,f)=1,\\[3pt]
0, & \text{otherwise.}
\end{cases}
\]
Then:
\begin{enumerate}[label=\textup{(\roman*)},leftmargin=*]
\item $e_\chi$ is well defined and $e_\chi\neq 0$;
\item $\rho(h)e_\chi=\chi(h)\,e_\chi$ for every $h\in\Gn$;
\item $\ip{q_n}{e_\chi}=d\,S(\overline{\chi^{*}})\neq 0$, where $\overline{\chi^{*}}$ is an
      odd primitive character modulo $f$ and $S$ is as in Lemma~\ref{lem:S}.
\end{enumerate}
\end{lemma}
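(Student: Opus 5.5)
We verify the three parts in order; (i) and (ii) are bookkeeping, and (iii) carries the arithmetic. For (i), we need that $x\equiv da\pmod n$ with $\gcd(a,f)=1$ determines $a$ modulo $f$. Indeed, if $da\equiv da'\pmod{df}$ then $a\equiv a'\pmod f$, so $\chi^*(a)=\chi^*(a')$ because $\chi^*$ is $f$-periodic. Taking $x=d$ (i.e.\ $a=1$) gives $e_\chi(d)=1$, so $e_\chi\neq0$.

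For (ii), we check pointwise that $e_\chi(hx)=\chi(h)e_\chi(x)$. Since $h$ is a unit modulo $n$, hence modulo $f$, we have $x\equiv da$ with $\gcd(a,f)=1$ if and only if $hx\equiv d(ha)$ with $\gcd(ha,f)=1$. So both sides vanish off the support. On the support,
\[
e_\chi(hx)=\chi^*(ha)=\chi^*(h)\chi^*(a)=\chi(h)\,e_\chi(x),
\]
where $\chi^*(h)=\chi(h)$ is the definition of "induced", valid because $\gcd(h,n)=1$.

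For (iii), we compute
\[
\ip{q_n}{e_\chi}=\sum_{x}q_n(x)\overline{e_\chi(x)}=\sum_{\substack{1\le a\le f\\ \gcd(a,f)=1}} q_n(da)\,\overline{\chi^*(a)}.
\]
The representatives $da$ with $1\le a\le f$ run through the support exactly once. Because $\chi$ is odd, $f\ge3$ (as in Lemma~\ref{lem:S}), so $a=f$ is excluded. Hence $1\le da\le n-1$ and $q_n(da)=da-\tfrac n2$. The constant term contributes $-\tfrac n2\sum_a\overline{\chi^*}(a)=0$, since $\overline{\chi^*}$ is non-principal. We are left with $d\sum_a a\,\overline{\chi^*}(a)=d\,S(\overline{\chi^*})$.

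It remains to see that $\overline{\chi^*}$ is odd and primitive modulo $f$. It is odd because $\overline{\chi^*}(-1)=\overline{\chi(-1)}=-1$. It is primitive because, if $\overline{\chi^*}$ were induced from a smaller modulus $f'\mid f$, conjugating would show that $\chi^*$, and hence $\chi$, is induced modulo $f'$, contradicting the minimality of the conductor. Lemma~\ref{lem:S} then gives $S(\overline{\chi^*})\neq0$, and $d\ge1$.

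The only delicate points are the well-definedness and support bookkeeping in (i)/(ii) and the exclusion of $a=f$ in (iii). Neither is a real obstacle; the genuine depth has already been isolated in Lemma~\ref{lem:S}.
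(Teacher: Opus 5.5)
Your proof is correct and follows the paper's argument essentially step for step: injectivity of $a\mapsto da$ from $\mathbb{Z}/f\mathbb{Z}$ to $\mathbb{Z}/n\mathbb{Z}$ gives well-definedness, invariance of the support under units gives the eigenvector property, and part (iii) reduces via $q_n(da)=d(a-\tfrac f2)$, with the constant term killed by $\sum_a\overline{\chi^{*}}(a)=0$. Your explicit check that $\overline{\chi^{*}}$ is primitive, by conjugating an inducing character back to $\chi^{*}$, is a small addition that the paper leaves implicit.
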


\begin{proof}
First, $\chi^{*}(-1)=\chi(-1)=-1$ because $\gcd(-1,n)=1$; hence $\chi^{*}$ and
$\overline{\chi^{*}}$ are odd primitive characters modulo $f$.

(i) The map $\Z/f\Z\to\Z/n\Z$, $a\mapsto da$, is well defined and injective, since
$da\equiv da'\pmod{df}$ if and only if $a\equiv a'\pmod f$. Hence if
$x\equiv da\pmod n$, the class of $a$ modulo $f$ is uniquely determined by $x$, and both
$\gcd(a,f)$ and $\chi^{*}(a)$ depend only on this class. Thus $e_\chi$ is well
defined; and $e_\chi(d)=\chi^{*}(1)=1\neq 0$.

(ii) Let $\Sigma=\{\,da\bmod n:\ \gcd(a,f)=1\,\}\subseteq\Z/n\Z$ be the support of
$e_\chi$. Let $h\in\Gn$ be represented by an integer with $\gcd(h,n)=1$; then
$\gcd(h,f)=1$ because $f\mid n$. For $x=da\in\Sigma$ we get $hx\equiv d(ha)\pmod n$ with
$\gcd(ha,f)=1$, so $h\Sigma\subseteq\Sigma$; applying this to $h^{-1}$ gives
$h\Sigma=\Sigma$. Now let $x\in\Z/n\Z$. If $x=da\in\Sigma$, then
\[
(\rho(h)e_\chi)(x)=e_\chi\bigl(d(ha)\bigr)=\chi^{*}(ha)=\chi^{*}(h)\chi^{*}(a)
=\chi(h)\,e_\chi(x),
\]
using $\chi^{*}(h)=\chi(h)$. If $x\notin\Sigma$, then $hx\notin\Sigma$, so
$(\rho(h)e_\chi)(x)=e_\chi(hx)=0=\chi(h)e_\chi(x)$. Hence
$\rho(h)e_\chi=\chi(h)e_\chi$.

(iii) Since $e_\chi$ vanishes off $\Sigma$, and each $x\in\Sigma$ is of the form
$x\equiv da$ for a unique class $a\in(\Z/f\Z)^{\times}$, which we represent by an integer
$1\le a\le f-1$ with $\gcd(a,f)=1$,
\[
\ip{q_n}{e_\chi}=\sum_{\substack{1\le a\le f-1\\ \gcd(a,f)=1}}q_n(da)\,\overline{\chi^{*}(a)} .
\]
For such $a$ the integer $da$ satisfies $1\le d\le da\le d(f-1)=n-d\le n-1$, so by the
definition of $q_n$ we have $q_n(da)=da-\frac n2=d\bigl(a-\frac f2\bigr)$. Since
$\overline{\chi^{*}}$ is non-principal, $\sum_{a}\overline{\chi^{*}(a)}=0$. Therefore
\[
\ip{q_n}{e_\chi}
=d\sum_{\substack{1\le a\le f-1\\ \gcd(a,f)=1}}\Bigl(a-\frac f2\Bigr)\overline{\chi^{*}(a)}
=d\sum_{\substack{1\le a\le f-1\\ \gcd(a,f)=1}}a\,\overline{\chi^{*}(a)}
=d\,S(\overline{\chi^{*}}),
\]
which is non-zero by Lemma~\ref{lem:S} applied to $\psi=\overline{\chi^{*}}$.
\end{proof}

\begin{proposition}\label{prop:indep}
Let $R$ be as in \eqref{eq:R}. The vectors $\qq_h$, $h\in R$, are linearly independent
over $\C$ (hence over $\R$). Consequently
\[
\rank[\qq_h:h\in\Hn]=\frac{\varphi(n)}{2}.
\]
\end{proposition}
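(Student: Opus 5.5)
Suppose $\sum_{h\in R}c_h\qq_h=0$ with $c_h\in\C$. We use the identification \eqref{eq:ident} and extend $c$ to an odd function on $\Gn$ by setting $c(-h):=-c(h)$ for $h\in R$. This is consistent because $R$ contains exactly one element of each pair $\{h,-h\}$, and $-h\neq h$ in $\Gn$ by Lemma~\ref{lem:pair}. By \eqref{eq:odd}, $c(-h)\rho(-h)q_n=c(h)\rho(h)q_n$, so
\[
F:=\sum_{h\in\Gn}c(h)\,\rho(h)q_n=2\sum_{h\in R}c_h\,\rho(h)q_n=0 .
\]
The goal is to show $\hat c(\chi)=0$ for every $\chi\in\Ghat$; Lemma~\ref{lem:char}(b) then gives $c\equiv0$, and in particular $c_h=0$ for all $h\in R$.

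\emph{Even characters.} Since $c(-h)=-c(h)$ and $\chi(-h)=\chi(h)$ when $\chi$ is even, the substitution $h\mapsto -h$ gives $\hat c(\chi)=-\hat c(\chi)$. Hence $\hat c(\chi)=0$ for every even $\chi$.

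\emph{Odd characters.} Fix an odd $\chi$ and pair $F$ with the eigenvector $e_\chi$ of Lemma~\ref{lem:eig}. By Lemma~\ref{lem:rep} and Lemma~\ref{lem:eig}(ii),
\[
\ip{\rho(h)q_n}{e_\chi}=\ip{q_n}{\rho(h^{-1})e_\chi}=\overline{\chi(h^{-1})}\,\ip{q_n}{e_\chi}=\chi(h)\,\ip{q_n}{e_\chi},
\]
using $\overline{\chi(h^{-1})}=\chi(h)$. Therefore
\[
0=\ip{F}{e_\chi}=\hat c(\chi)\,\ip{q_n}{e_\chi}.
\]
By Lemma~\ref{lem:eig}(iii), $\ip{q_n}{e_\chi}\neq0$, so $\hat c(\chi)=0$. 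This step carries all the arithmetic, through Lemma~\ref{lem:S}, and it is already done. The only care needed is to keep the conjugation convention straight: the inner product is conjugate-linear in its second variable, so one must check that the factor produced is $\chi(h)$ and not $\overline{\chi(h)}$, so that the coefficient really is $\hat c(\chi)$. If the conjugation came out the other way, one would obtain $\hat c(\bar\chi)$ instead, which is harmless because $\bar\chi$ is also odd and $\chi$ ranges over all odd characters.

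Thus $\hat c\equiv0$ on $\Ghat$, so $c\equiv0$ and the vectors $\qq_h$, $h\in R$, are linearly independent over $\C$, hence over $\R$ by the convention on the ground field. For the rank statement, Lemma~\ref{lem:centered}(iii) gives $\qq_{n-h}=-\qq_h$, so every column $\qq_h$ with $h\in\Hn$ is $\pm\qq_{h'}$ for some $h'\in R$. The column space of $[\qq_h:h\in\Hn]$ is therefore spanned by $\{\qq_h:h\in R\}$, which is linearly independent. Its dimension is $|R|=\varphi(n)/2$.
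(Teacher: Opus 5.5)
Your proposal is correct and follows the paper's proof essentially step for step. Both extend $c$ oddly to $\Gn$, kill $\hat c(\chi)$ for odd $\chi$ by pairing with $e_\chi$ (with the same conjugation bookkeeping), and kill $\hat c(\chi)$ for even $\chi$ by oddness of $c$; both then conclude via Lemma~\ref{lem:char}(b) and obtain the rank from Lemma~\ref{lem:centered}(iii).
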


\begin{proof}
Suppose $\sum_{h\in R}c_h\,\qq_h=0$ with $c_h\in\C$. Define $c:\Gn\to\C$ by $c(h)=c_h$
and $c(-h)=-c_h$ for $h\in R$; this is well defined because $\Gn$ is the disjoint union
of the pairs $\{h,-h\}$, $h\in R$ (Lemma~\ref{lem:pair}). Applying $\iota^{-1}$ to the
relation and using \eqref{eq:ident} and \eqref{eq:odd}, we obtain in $V$
\[
v:=\sum_{h\in\Gn}c(h)\,\rho(h)q_n
=\sum_{h\in R}\bigl(c_h\,\rho(h)q_n+(-c_h)(-\rho(h)q_n)\bigr)
=2\sum_{h\in R}c_h\,\iota^{-1}(\qq_h)=0 .
\]
Let $\chi$ be an odd character. By Lemma~\ref{lem:rep} and Lemma~\ref{lem:eig}(ii), for
every $h\in\Gn$,
\[
\ip{\rho(h)q_n}{e_\chi}=\ip{q_n}{\rho(h^{-1})e_\chi}=\ip{q_n}{\chi(h^{-1})e_\chi}
=\overline{\chi(h^{-1})}\,\ip{q_n}{e_\chi}=\chi(h)\,\ip{q_n}{e_\chi}.
\]
Hence
\[
0=\ip{v}{e_\chi}=\sum_{h\in\Gn}c(h)\,\chi(h)\,\ip{q_n}{e_\chi}
=\ip{q_n}{e_\chi}\,\hat c(\chi),
\]
and since $\ip{q_n}{e_\chi}\neq 0$ by Lemma~\ref{lem:eig}(iii), we get
$\hat c(\chi)=0$. If $\chi$ is even, then
\[
\hat c(\chi)=\sum_{h\in R}\bigl(c_h\chi(h)-c_h\chi(-h)\bigr)
=\sum_{h\in R}c_h\bigl(\chi(h)-\chi(-1)\chi(h)\bigr)=0 .
\]
Thus $\hat c(\chi)=0$ for all $\chi\in\Ghat$, and Lemma~\ref{lem:char}(b) gives
$c\equiv 0$, i.e.\ $c_h=0$ for all $h\in R$.

For the last assertion, Lemma~\ref{lem:centered}(iii) shows
$\spn\{\qq_h:h\in\Hn\}=\spn\{\qq_h:h\in R\}$, which has dimension $|R|=\varphi(n)/2$ by
what we have just proved.
\end{proof}

\section{Proof of \texorpdfstring{Theorem~\ref{thm:main}}{the main theorem}}

\begin{proof}[Proof of Theorem~\ref{thm:main}]
By Lemma~\ref{lem:pair}, $\NN_n=\uu_h+\uu_{n-h}\in\col(\UU_n)$ for any $h\in\Hn$. Hence
$\qq_h=\uu_h-\frac12\NN_n\in\col(\UU_n)$ for every $h\in\Hn$; conversely
$\uu_h=\frac12\NN_n+\qq_h$. Therefore
\[
\col(\UU_n)=\R\NN_n+Q,\qquad Q:=\spn_{\R}\{\qq_h:h\in\Hn\}.
\]
By Lemma~\ref{lem:centered}(ii), the $n$-th coordinate of every vector of $Q$ is $0$,
whereas $(\NN_n)_n=2n\neq 0$. Hence $\NN_n\notin Q$ and
$\dim(\R\NN_n+Q)=\dim Q+1$. By Proposition~\ref{prop:indep} (together with the
convention on the ground field), $\dim Q=\varphi(n)/2$. Thus
\[
\rank(\UU_n)=\dim\col(\UU_n)=\frac{\varphi(n)}{2}+1,
\]
which also shows that the upper bound of Corollary~\ref{cor:upper} is attained.
\end{proof}

\section{Remarks}

\begin{remark}[On the support of \texorpdfstring{$e_\chi$}{e\_chi}]
It is essential in Lemma~\ref{lem:eig} that $e_\chi$ is supported on the set
$\{x:\gcd(x,n)=d\}$ (which consists of non-units when $f<n$) rather than on the units.
If one only used the rows $i$ with $\gcd(i,n)=1$, the relevant pairing would be
$\sum_{h\in\Gn}q_n(h)\chi(h)=nB_{1,\chi}$, and for a character $\chi$ modulo $n$ of
conductor $f$ one has $B_{1,\chi}=B_{1,\chi^{*}}\prod_{p\mid n,\;p\nmid f}\bigl(1-\chi^{*}(p)\bigr)$
(see \cite[Chapter~4]{Wa}), which may vanish. For example, let $n=21$ and let $\chi$ be
induced by the quadratic character modulo $3$, so that $\chi^{*}(7)=1$. The units modulo
$21$ that are $\equiv 1\pmod 3$ are $1,4,10,13,16,19$, and those $\equiv 2\pmod 3$ are
$2,5,8,11,17,20$; both sets have sum $63=6\cdot\frac{21}{2}$, so
\[
\sum_{h\in G_{21}}q_{21}(h)\chi(h)
=\sum_{h\equiv 1\,(3)}\Bigl(h-\frac{21}{2}\Bigr)-\sum_{h\equiv 2\,(3)}\Bigl(h-\frac{21}{2}\Bigr)=0 .
\]
Accordingly, the $12\times 6$ submatrix of $[\qq_h:h\in R]$ formed by the rows $i$ with
$\gcd(i,21)=1$ has rank only $5$, whereas the full $21\times 6$ matrix has rank $6$:
the rows with $\gcd(i,21)\in\{3,7\}$ are needed, and the eigenvector $e_\chi$ of
Lemma~\ref{lem:eig} (with $d=7$, $f=3$) is supported precisely on the rows $7$ and $14$.
\end{remark}

\begin{remark}[Relation to the projection argument]
The proof of Proposition~\ref{prop:indep} can be rephrased in terms of the orthogonal
projections
\[
P_\chi=\frac{1}{\varphi(n)}\sum_{h\in\Gn}\overline{\chi(h)}\,\rho(h)
\qquad\text{onto}\qquad
V_\chi=\{v\in V:\rho(h)v=\chi(h)v \text{ for all } h\in\Gn\}.
\]
One has $V=\bigoplus_{\chi}V_\chi$ orthogonally, $P_\chi q_n\in\spn\{\rho(h)q_n:h\in\Gn\}$,
and $\ip{P_\chi q_n}{e_\chi}=\ip{q_n}{e_\chi}\neq 0$ for odd $\chi$, because
$e_\chi\in V_\chi$ and $P_\chi$ is self-adjoint. The $\varphi(n)/2$ non-zero vectors $P_\chi q_n$
($\chi$ odd) lie in mutually orthogonal subspaces and are therefore linearly independent.
The Fourier-inversion argument given above is the same computation, organized so as to
prove directly that the columns $\qq_h$, $h\in R$, are linearly independent.
\end{remark}

\begin{remark}[The hypothesis \texorpdfstring{$n>2$}{n>2}]
The hypothesis $n>2$ is used exactly to guarantee $-1\neq 1$ in $\Gn$
(Lemma~\ref{lem:pair} and Lemma~\ref{lem:char}(c)). It cannot be dropped: for $n=2$ one
has $\mathcal H_2=\{1\}$, $\UU_2=(1,2)^{\top}$ and $\rank(\UU_2)=1\neq\varphi(2)/2+1$.
Theorem~\ref{thm:main} makes no distinction between odd, singly even and doubly even
sample sizes $n$.
\end{remark}

\section*{Acknowledgments}
The author is deeply grateful to Professor Kai-Tai Fang for introducing him to the fields of quasi-Monte Carlo methods and representative points. Professor Fang’s pioneering contributions to uniform design have served as a significant source of inspiration for the present work.
The author also gratefully acknowledges the assistance of ChatGPT (GPT-6, OpenAI) and Claude (Anthropic) in the preparation of this manuscript.


\begin{thebibliography}{9}
\bibitem[Ap]{Ap} T.~M. Apostol, \emph{Introduction to Analytic Number Theory},
Undergraduate Texts in Mathematics, Springer, New York, 1976.
\bibitem[Da]{Da} H.~Davenport, \emph{Multiplicative Number Theory}, 3rd ed.,
Graduate Texts in Mathematics 74, Springer, New York, 2000.
\bibitem[Wa]{Wa} L.~C. Washington, \emph{Introduction to Cyclotomic Fields}, 2nd ed.,
Graduate Texts in Mathematics 83, Springer, New York, 1997.
\end{thebibliography}
\end{document}